\author{Matthias Beck\addressmark{1}\thanks{Research partially supported by NSF grant DMS-0810105.}\and Yvonne Kemper\addressmark{2}\thanks{Research partially supported by NSF grant DMS-0914107 and by NSF VIGRE grant DMS-0636297.}}
\title{Flows on Simplicial Complexes}
\address{\addressmark{1}Department of Mathematics, San Francisco State University, San Francisco, CA 94132, USA\\
\addressmark{2}Department of Mathematics, University of California at Davis, Davis, CA 95616, USA}
\newtheorem{theorem}{Theorem} 
\newtheorem{proposition}[theorem]{Proposition}
\newtheorem{lemma}[theorem]{Lemma}
\newenvironment{definition}[1][Definition]{\begin{trivlist}
\item[\hskip \labelsep {\bfseries #1}]}{\end{trivlist}}
\DeclareMathOperator{\lk}{{\rm{lk}}}
\DeclareMathOperator{\supp}{{\rm{supp}}}
\DeclareMathOperator{\ran}{{\rm{rank}}}
\newcommand{\RR}{\mathbb{R}}
\newcommand{\QQ}{\mathbb{Q}}
\newcommand{\ZZ}{\mathbb{Z}}
\newcommand{\FF}{\mathbb{F}}
\newcommand{\I}{\mathcal{I}}
\renewcommand{\P}{\mathcal{P}}
\newcommand\ehr{\operatorname{ehr}}
\newcommand\commentout[1]{}
\keywords{Nowhere-zero flows, simplicial complexes, matroids, Tutte polynomial}
\begin{document}
\maketitle
\begin{abstract}
\paragraph{Abstract.}
Given a graph $G$, the number of nowhere-zero $\ZZ_q$-flows $\phi_G(q)$ is known to be a polynomial in $q$.  We extend the definition of nowhere-zero $\ZZ_q$-flows to simplicial complexes $\Delta$ of dimension greater than one, and prove the polynomiality of the corresponding function $\phi_{\Delta}(q)$ for certain $q$ and certain subclasses of simplicial complexes.  

\paragraph{R\'esum\'e.}
E\'tant donn\'e une graphe  $G$, on est connu que le nombre de $\ZZ_q$-flots non-nuls $G$ $(q)$ est un polyn\textasciicircum{o}me dans $q$.  Nous \'etendons la d\'efinition de $\ZZ_q$-flots non-nuls pour incluir des complexes simpliciaux de dimension plus grande qu'un, et on montre que le nombre est aussi un polyn\textasciicircum{o}me de la fonction correspondante pour certain valeurs de $q$ et de certaines sous-classes de complexes simpliciaux.

\end{abstract}


\section{Introduction}\label{sec:in}

Flows on graphs were first studied by Tutte (\cite{TutteOrig1,TutteOrig}; see also \cite{Brooks} and \cite{Tutte1976}) in the context of Kirchoff's electrical circuit laws.  Since this time, a great deal of work (see, for instance, \cite{Jaeger-book},\cite{Seymour-book}, \cite[Ch. 6, Sec. 3.B, 3.C]{White-book}, or \cite{MattFlows}) has been done, and applications found in network and information theory, optimization, and other fields.

To define a flow on a graph $G$, we first give an initial orientation to its edges (this orientation is arbitrary, but fixed).  Then, a \emph{$\ZZ_q$-flow} on $G$ is an assignment of values from $\ZZ_q$ to each edge such that modulo $q$, the sum of values entering each node is equal to the sum of values leaving it.  If none of the edges receives zero weight, the $\ZZ_q$-flow is \emph{nowhere-zero}.  In terms of the signed incidence matrix $M$ of $G$, a $\ZZ_q$-flow is an element of the kernel of $M$ mod~$q$.

In this paper, we extend the idea of flows on graphs to flows on simplicial complexes (namely, elements of the kernel of the boundary map mod $q$---we give detailed definitions below), and explore the work done on graph flows in the context of simplicial complexes.
We consider this paper as the starting point of a systematic study of flows on simplicial complexes.
Extending Tutte's polynomiality result for the enumeration of nowhere-zero flows on graphs, our first main result is the following:

\begin{theorem}\label{mainthm}
Let $q$ be a sufficiently large prime number, and let $\Delta$ be a simplicial complex of dimension $d$.  Then the number $\phi_\Delta(q)$ of nowhere-zero $\ZZ_q$-flows on $\Delta$ is a polynomial in $q$ of degree $\beta_d(\Delta)=\dim_{\QQ}(\widetilde{H}_d(\Delta,\QQ))$.
\end{theorem}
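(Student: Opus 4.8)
The plan is to reduce the count of nowhere-zero $\ZZ_q$-flows to linear algebra over the field $\FF_q=\ZZ_q$ (here we use that $q$ is prime), to apply inclusion--exclusion over the set $F_d$ of $d$-faces of $\Delta$, and finally to invoke the fact that the rank of a fixed integer matrix over $\FF_q$ agrees with its rank over $\QQ$ for all but finitely many primes $q$. Concretely, fix an orientation of each $d$-face, so that the $\ZZ_q$-flows on $\Delta$ form the kernel $Z_d(\Delta,\FF_q)$ of the top boundary matrix $\partial_d\colon\FF_q^{F_d}\to\FF_q^{F_{d-1}}$. Since $\Delta$ has no $(d+1)$-faces we have $B_d=0$, hence $Z_d=\widetilde{H}_d$, and the number of $\ZZ_q$-flows is $q^{\beta_d(\Delta,\FF_q)}$, where $\beta_d(\Delta,\FF_q)=|F_d|-\ran_{\FF_q}\partial_d$.

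Next I would run inclusion--exclusion. For $S\subseteq F_d$, deleting the $d$-faces in $S$ (and keeping all faces of dimension $<d$) yields a simplicial complex $\Delta\setminus S$ with the same $(d-1)$-skeleton, and a flow on $\Delta$ that vanishes on all of $S$ is exactly a flow on $\Delta\setminus S$, of which there are $q^{\beta_d(\Delta\setminus S,\FF_q)}$. Sieving over the conditions ``the flow is zero on face $\sigma$'' gives
\[
\phi_\Delta(q)=\sum_{S\subseteq F_d}(-1)^{|S|}\,q^{\,\beta_d(\Delta\setminus S,\,\FF_q)}.
\]
To convert this into a polynomial identity, note that each $\partial_d^{\Delta\setminus S}$ is a column-submatrix of the single integer matrix $\partial_d$, and that for any integer matrix $M$ one has $\ran_{\FF_q}M=\ran_{\QQ}M$ whenever $q$ does not divide the largest elementary divisor of $M$, i.e.\ for all but finitely many primes. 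Since there are only finitely many subsets $S$, I can choose $q_0$ so that for every prime $q>q_0$ and every $S$ the two ranks agree, whence $\beta_d(\Delta\setminus S,\FF_q)=\beta_d(\Delta\setminus S,\QQ)=:\beta_d(\Delta\setminus S)$ and
\[
\phi_\Delta(q)=\sum_{S\subseteq F_d}(-1)^{|S|}\,q^{\,\beta_d(\Delta\setminus S)}
\]
for all primes $q>q_0$, which is manifestly a polynomial in $q$. (For $d=1$ the matrix $\partial_1$ is totally unimodular, every elementary divisor equals $1$, one may take $q_0=1$, and one recovers Tutte's theorem.)

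It remains to read off the degree. Deleting a single column from a matrix decreases its nullity by $0$ or $1$ and never increases it, so $\beta_d(\Delta\setminus S)\le\beta_d(\Delta)$ for every $S$, with equality at $S=\emptyset$; hence $\deg\phi_\Delta\le\beta_d(\Delta)$ and the coefficient of $q^{\beta_d(\Delta)}$ is $\sum_{S}(-1)^{|S|}$ with $S$ ranging over those subsets for which $\beta_d(\Delta\setminus S)=\beta_d(\Delta)$. I would identify these $S$ as precisely the subsets of the set $L$ of $d$-faces lying in the support of no flow --- equivalently, the coloops of the matroid $\MMM$ represented over $\QQ$ by the columns of $\partial_d$ --- using that deleting a column preserves nullity iff that column is not in the span of the remaining columns. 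If $L=\emptyset$ this coefficient equals $1$, so $\phi_\Delta$ is monic of degree $\beta_d(\Delta)$; if $L\ne\emptyset$ then every flow is forced to vanish on a face of $L$, so $\phi_\Delta\equiv0$, and this degenerate situation has to be excluded or folded into the hypotheses.

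The step I expect to be the main obstacle is the rank stabilization: one must control, uniformly over the finitely many column-submatrices $\partial_d^{\Delta\setminus S}$, the finite set of primes at which the $\FF_q$-rank drops below the $\QQ$-rank. Those ``bad'' primes are exactly the ones dividing torsion in the relevant relative homology of $\Delta$, which is precisely why the higher-dimensional statement requires ``$q$ sufficiently large'' whereas the graph case does not. The degree computation is the secondary subtlety; the rest is bookkeeping.
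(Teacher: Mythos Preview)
Your proof is correct and takes a genuinely different route from the paper's. The paper realizes the columns of $\partial\Delta$ as a matroid $M$ and shows, via deletion--contraction, that the nowhere-zero count is a generalized Tutte--Grothendieck invariant, hence equal to $(-1)^{|E|-r(E)}T_M(0,1-q)$; the ``sufficiently large prime'' hypothesis enters so that the matroid represented over $\FF_q$ coincides with the one over $\QQ$. You instead run inclusion--exclusion directly over the facets and use rank stabilization of the finitely many column-submatrices of $\partial_d$, arriving at the explicit closed form $\phi_\Delta(q)=\sum_{S\subseteq F_d}(-1)^{|S|}q^{\beta_d(\Delta\setminus S)}$. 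This is exactly the corank--nullity expansion of $(-1)^{|E|-r(E)}T_M(0,1-q)$, so the two arguments converge on the same polynomial, but yours avoids the Tutte--Grothendieck machinery entirely and is more elementary. The paper's approach, in exchange, situates $\phi_\Delta$ inside the Tutte framework and makes the connection to other matroid invariants immediate.

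Your degree analysis is in fact more careful than the paper's. You correctly identify that the leading coefficient is $\sum_{S\subseteq L}(-1)^{|S|}$, which is $1$ when the coloop set $L$ is empty and $0$ otherwise; in the latter case $\phi_\Delta\equiv 0$ (as you note, any flow must vanish on every coloop), so the stated degree $\beta_d(\Delta)$ is not attained. The paper simply asserts the degree is $|E|-r(M)$ from the Tutte expansion and does not flag this degenerate case, so your caveat is a genuine refinement rather than a gap on your side.
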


In other words, there exists a polynomial $p(x)$ that coincides with $\phi_\Delta(q)$ at sufficiently large primes. In fact, $\phi_\Delta(q)$ coincides with $p(x)$ at many more evaluations:

\begin{theorem}\label{mainthm2}
The number $\phi_\Delta(q)$ of nowhere-zero $\ZZ_q$-flows on $\Delta$ is a quasipolynomial in $q$.
Furthermore, there exists a polynomial $p(x)$ such that $\phi_\Delta(k) = p(k)$ for all integers $k$ that are relatively prime to the period of~$\phi_\Delta(q)$.
\end{theorem}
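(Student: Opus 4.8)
The plan is to run an inclusion--exclusion over the vanishing pattern of a flow and to evaluate each resulting term by the Smith normal form; specializing $q$ to a large prime in this argument recovers the polynomiality asserted in Theorem~\ref{mainthm}, while keeping $q$ arbitrary produces the quasipolynomial. Let $\partial=\partial_d$ be the integer $d$-th boundary matrix of $\Delta$, with columns indexed by the $d$-faces $F_1,\dots,F_n$, so that a nowhere-zero $\ZZ_q$-flow is an $x\in\ker(\partial\bmod q)$ with every $x_i\neq 0$. Sieving over the set $S$ of coordinates allowed to vanish gives
\[
\phi_\Delta(q)\;=\;\sum_{S\subseteq[n]}(-1)^{|S|}\,\bigl|\ker(\partial_{\overline S}\bmod q)\bigr|,
\]
where $\partial_{\overline S}$ is $\partial$ with the columns in $S$ deleted.

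The first step is a closed form for each term. If $A$ is an integer matrix with $c$ columns, $\ran A=r$, and invariant factors $s_1\mid\cdots\mid s_r$, then writing $A=UDV$ with $U,V$ unimodular shows that $x\mapsto Vx$ is a bijection of $\ZZ_q^c$ carrying $\ker(A\bmod q)$ onto $\ker(D\bmod q)$, whence
\[
\bigl|\ker(A\bmod q)\bigr|\;=\;q^{\,c-r}\prod_{i=1}^{r}\gcd(s_i,q).
\]
Apply this to $A=\partial_{\overline S}$, with $c=n-|S|$, rank $r_S$, and invariant factors $s_i^{(S)}$. Since $q\mapsto\gcd(m,q)$ is periodic with period dividing $m$, and $s_i^{(S)}\mid s_{r_S}^{(S)}$, each term $q^{\,n-|S|-r_S}\prod_i\gcd(s_i^{(S)},q)$ is a quasipolynomial in $q$ of period dividing $s_{r_S}^{(S)}$. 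Summing over $S$, $\phi_\Delta(q)$ is a quasipolynomial in $q$ of period dividing $L:=\operatorname{lcm}\{\,s_{r_S}^{(S)}:S\subseteq[n]\,\}$.

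The second step isolates the polynomial. If $\gcd(k,L)=1$ then $\gcd(s_i^{(S)},k)=1$ for all $S$ and $i$, so every kernel count collapses to $k^{\,n-|S|-r_S}$ and
\[
\phi_\Delta(k)\;=\;\sum_{S\subseteq[n]}(-1)^{|S|}\,k^{\,n-|S|-r_S}\;=:\;p(k),
\]
a fixed polynomial in $k$ (each exponent is nonnegative, being a number of columns minus a rank); it agrees with the polynomial of Theorem~\ref{mainthm} at every large prime, and its $S=\emptyset$ term is $k^{\,n-r}$ with $n-r=\dim_\QQ\ker(\partial_d\otimes\QQ)=\beta_d(\Delta)$. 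To reach the exact range claimed, let $\pi$ be the minimal period of $\phi_\Delta$, so $\pi\mid L$, and fix a residue $j$ with $\gcd(j,\pi)=1$: by the Chinese remainder theorem there are infinitely many $k\equiv j\pmod\pi$ with $\gcd(k,L)=1$ (a prime of $L$ dividing $\pi$ never divides such a $k$ because $k\equiv j$; a prime of $L$ prime to $\pi$ is excluded by one further residue condition, compatible with $k\equiv j$). On these $k$ we have $\phi_\Delta(k)=p(k)$, so the polynomial representing $\phi_\Delta$ on the residue class of $j$ is $p$; hence $\phi_\Delta(k)=p(k)$ for every integer $k$ coprime to the period.

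I expect the only genuinely delicate point to be this last, bookkeeping step, which bridges ``coprime to $L$'', delivered directly by the computation, and ``coprime to the period'', which is what the statement asserts; the inclusion--exclusion and the Smith-normal-form evaluation are routine and parallel to the prime case behind Theorem~\ref{mainthm}. A minor trap to watch is that deleting columns can \emph{enlarge} invariant factors, so $L$ genuinely must be the $\operatorname{lcm}$ over all the submatrices $\partial_{\overline S}$, not merely a divisor of the largest invariant factor of $\partial$.
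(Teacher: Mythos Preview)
Your proof is correct, and it follows a genuinely different route from the paper's.

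The paper obtains quasipolynomiality by recognizing $\phi_\Delta(q)$ as a sum of Ehrhart quasipolynomials: a nowhere-zero $\ZZ_q$-flow is an integer point in the open box $(0,q)^n$ lying on one of finitely many translates of $\ker(\partial\Delta)$, and Ehrhart's theorem applies. For the second assertion the paper leans on Theorem~\ref{mainthm} (proved via the Tutte polynomial of the column matroid of $\partial\Delta$) together with Dirichlet's theorem, which supplies infinitely many large primes in each residue class coprime to the period; the constituent on such a class must then coincide with the polynomial from Theorem~\ref{mainthm}.

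Your argument replaces both pieces with more elementary ones. Inclusion--exclusion plus Smith normal form gives a completely explicit quasipolynomial expression
\[
\phi_\Delta(q)=\sum_{S\subseteq[n]}(-1)^{|S|}q^{\,n-|S|-r_S}\prod_i\gcd(s_i^{(S)},q),
\]
together with an explicit period bound $L=\operatorname{lcm}_S s_{r_S}^{(S)}$ that the Ehrhart argument does not readily provide. Your polynomial $p(k)=\sum_S(-1)^{|S|}k^{\,n-|S|-r_S}$ drops out directly, so you do not need the matroid machinery behind Theorem~\ref{mainthm}; and your CRT step (rather than Dirichlet) suffices to pass from ``coprime to $L$'' to ``coprime to the minimal period $\pi$'', since $\pi\mid L$ lets you lift any residue coprime to $\pi$ to a full arithmetic progression coprime to $L$. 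The trade-off is length: the paper's proof is a paragraph because it outsources the work to Ehrhart and to Theorem~\ref{mainthm}, whereas yours is self-contained but longer. Your remark that deleting columns can enlarge invariant factors, so that $L$ must be taken over all $\partial_{\overline S}$ and not just $\partial$, is a genuine subtlety worth keeping.
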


(Again, we give detailed definitions, e.g., of quasipolynomials and their periods, below.)
For the case where the simplicial complex triangulates a manifold, we prove more.

\begin{proposition}\label{manifoldprop}
Let $\Delta$ be a triangulation of a manifold.  Then
\[\phi_{\Delta}(q) = \begin{cases}
0 & \text{if }\Delta\text{ has boundary,} \\
q-1 & \text{if }\Delta\text{ is without boundary, $\ZZ$-orientable,}\\
0 & \text{if }\Delta\text{ is without boundary, non-$\ZZ$-orientable, and } q \neq 2,\\
1 & \text{if }\Delta\text{ is without boundary, non-$\ZZ$-orientable, and } q=2.
\end{cases}\]
\end{proposition}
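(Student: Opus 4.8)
The space of $\ZZ_q$-flows on $\Delta$ is by definition $\ker(\partial_d \bmod q)$, and since $\dim\Delta=d$ forces $\partial_{d+1}=0$, this kernel is exactly $\widetilde{H}_d(\Delta;\ZZ_q)$; thus $\phi_\Delta(q)$ counts the classes in $\widetilde{H}_d(\Delta;\ZZ_q)$ whose coordinate at every facet is a nonzero element of $\ZZ_q$. I would treat the four cases in turn, assuming $\Delta$ connected throughout (for a disconnected triangulated manifold one multiplies the counts of its components, which is why the statement is for connected $\Delta$). If $\partial\Delta\neq\emptyset$, pick a $(d-1)$-face $\sigma\in\partial\Delta$; by definition of the boundary of a triangulated $d$-manifold, $\sigma$ is contained in a unique facet $F$. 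The coefficient of $\sigma$ in $\partial_d(\sum_G a_G\,G)$ is $\pm a_F$, so any flow has $a_F=0$ and cannot be nowhere-zero; hence $\phi_\Delta(q)=0$.

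Suppose next that $\Delta$ is closed and $\ZZ$-orientable. A coherent orientation of the facets gives the fundamental cycle $z=\sum_F\varepsilon_F\,F$ with each $\varepsilon_F\in\{\pm1\}$, and $z$ generates $\widetilde{H}_d(\Delta;\ZZ)\cong\ZZ$. Since $\widetilde{H}_{d-1}(\Delta;\ZZ)$ is torsion-free for a closed orientable manifold (Poincar\'e duality together with the universal coefficient theorem), reduction modulo $q$ gives $\widetilde{H}_d(\Delta;\ZZ_q)\cong\ZZ_q$ with generator $\bar z$. Every flow is therefore $k\bar z$ for a unique $k\in\ZZ_q$, all of whose coordinates equal $\pm k$; such a flow is nowhere-zero exactly when $k\neq 0$, so $\phi_\Delta(q)=q-1$.

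Now suppose $\Delta$ is closed and non-$\ZZ$-orientable. For $q=2$ I would argue combinatorially: a $\ZZ_2$-chain $\sum a_G\,G$ is a cycle iff $a_{F_1}=a_{F_2}$ whenever facets $F_1,F_2$ share a ridge, and since every ridge of a closed manifold lies in exactly two facets and the dual graph of a connected manifold is connected, the only $\ZZ_2$-cycles are $0$ and the all-ones chain $\sum_F F$; the latter is nowhere-zero, so $\phi_\Delta(2)=1$. For $q$ prime with $q\neq 2$, a closed manifold is $\ZZ_q$-orientable precisely when it is $\ZZ$-orientable, so $\widetilde{H}_d(\Delta;\ZZ_q)=0$; there is no nonzero flow and $\phi_\Delta(q)=0$. (Equivalently: the torsion of $\widetilde{H}_{d-1}(\Delta;\ZZ)$ of a closed non-orientable manifold is $\ZZ/2$, which is killed on passing to $\ZZ_q$-coefficients for odd $q$.)

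The step I expect to be the main obstacle is this last one: pinning down $\widetilde{H}_d(\Delta;\ZZ_q)$ in the non-orientable case requires controlling the torsion of $\widetilde{H}_{d-1}$ near the top dimension of a closed manifold, which is exactly where Poincar\'e--Lefschetz duality and the classification of $\ZZ_q$-orientability have to be invoked. It is also the one place where the arithmetic of $q$ enters: the argument needs $2$ to be invertible modulo $q$, i.e. $q$ odd, which is automatic for the primes $q\neq 2$ that Theorems~\ref{mainthm} and~\ref{mainthm2} concern. The boundary and $\ZZ$-orientable cases, by contrast, go through verbatim for every $q\ge 2$.
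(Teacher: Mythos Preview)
Your argument is correct and, in the non-orientable case, more careful than the paper's. The paper's route is more combinatorial: since every ridge of a closed triangulated manifold lies in exactly two facets, every row of $\partial\Delta$ has exactly two $\pm 1$ entries, so by connectedness any $\ZZ_q$-flow has all its coordinates equal to $\pm a$ for a single $a\in\ZZ_q$. That elementary observation does the work of your Poincar\'e duality plus universal-coefficient step in the orientable case, and it yields the $q=2$ count directly. For closed non-orientable $\Delta$ with $q\neq 2$, however, the paper simply cites $\widetilde H_d(\Delta;\ZZ)=0$ and asserts there are no nontrivial $\ZZ_q$-flows---exactly the step you flagged as the obstacle. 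Your torsion computation makes this rigorous for odd $q$, and you are right to stop there: the claim in fact fails for even $q>2$, as the paper's own Klein bottle computation in Section~3.3 shows ($\phi_K(q)=1$ for every even $q$, not only $q=2$). In short, the paper's combinatorial lemma is lighter machinery and handles the orientable and $q=2$ cases cleanly, while your homological bookkeeping is what is actually needed to see precisely where the parity of $q$ enters.
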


In Section \ref{defsection}, we provide background on simplicial complexes and boundary operators, and define flows on simplicial complexes.  We also discuss certain details of the boundary matrix of pure simplicial complexes.  In Section \ref{proofsection}, we prove Theorems \ref{mainthm} and \ref{mainthm2}.  In Section \ref{manifoldsection}, we examine the specific case of triangulated manifolds and prove Proposition \ref{manifoldprop}.  In Section \ref{openquestion}, we present several open problems.


\section{Flows on Simplicial Complexes}\label{defsection}

Recall that an \emph{(abstract) simplicial complex} $\Delta$ on a \emph{vertex set} $V$ is a set of subsets of $V$.  These subsets are called the \emph{faces} of $\Delta$, and we require that
\begin{enumerate}[(1)]
  \item for all $v\in V$, $\{v\}\in \Delta$, and 
  \item for all $F\in\Delta$, if $G\subseteq F$, then $G\in\Delta$.
\end{enumerate}
The \emph{dimension} of a face $F$ is $\dim(F)=|F|-1$, and the \emph{dimension} of $\Delta$ is $\dim(\Delta) = \max\{\dim(F) : \, F\in\Delta\}$.  A simplicial complex is \emph{pure} if all maximal faces (that is, faces that are not properly contained in any other face) have the same cardinality.  In this case, a maximal face is called a \emph{facet}, and a \emph{ridge} is a face of codimension one.  For more background on simplicial complexes, see \cite[Ch.\ 0.3]{Stanley-book} or \cite[Ch.\ 2]{Hatcher-book}.

Let $\Delta$ be a pure simplicial complex of dimension $d$ with vertex set $V=\{v_0,\ldots,v_n\}$.  Assign an ordering to $V$ so that $v_0<v_1<\cdots<v_n$.  Any $r$-dimensional face of $\Delta$ can be written (in terms of this ordering) as $[v_{i_0}\cdots v_{i_r}]$.  Let $\partial$ be the boundary map on the simplicial chains of $\Delta$, defined by
\[\partial [v_{i_0}\cdots v_{i_r}] = \sum_{j=0}^r (-1)^j[v_{i_0}\cdots\widehat{v_{i_j}}\cdots v_{i_r}].\]

Define the boundary matrix $\partial\Delta$ of $\Delta$ to be the matrix whose rows correspond to the ridges and columns to the facets of $\Delta$, and whose entries are $\pm 1$ or $0$, depending on the sign of the ridge in the boundary of the facet.  In the case of a graph, the facets are the edges, and the ridges are the vertices, and, under the natural ordering of the vertices, the boundary matrix is identical to the signed incidence matrix of the graph.  For more background on the boundary map and boundary matrices, see \cite[Ch.~2]{Hatcher-book}.

Now, since a flow on a graph (one-dimensional simplicial complex) is an element of the kernel mod $q$ of the signed incidence (boundary) matrix, we have a natural way to extend the notion of a flow on a graph to a flow on a simplicial complex. 
\begin{definition} A \emph{$\ZZ_q$-flow} on a pure simplicial complex $\Delta$ is an element of the kernel of $\partial\Delta\bmod q$.  A \emph{nowhere-zero $\ZZ_q$-flow} is a $\ZZ_q$-flow with no entries equal to zero $\bmod q$.
\end{definition}

As an example, consider the surface of a tetrahedron, with vertices $V=\{1,2,3,4\}$, with the natural ordering.  Its boundary matrix $\partial\Delta$ is:
\[\begin{array}{c|cccc}
 & 124 & 134 & 234 & 123\\
\hline
14 &  -1 &  -1 &  0  & 0\\
24 &  1  &  0  &  -1 & 0\\
34 &  0  &  1  &  1  & 0\\
12 &  1  &  0  &  0  & 1\\
13 &  0  &  1  &  0  & -1\\
23 &  0  &  0  &  1  & 1
\end{array} \]
One instance of a nowhere-zero $\ZZ_q$-flow on $\Delta$ is
$(x_{123},x_{124},x_{134},x_{234})^T = (1,q-1,1,q-1)^T$.

For a second example, consider a pure simplicial complex $\Delta$ with a vertex $v$ that is contained in every facet, i.e., $\Delta$ is a \emph{cone} over $v$.

\begin{lemma} Let $\Delta$ be a pure simplicial complex that is a cone over a vertex $v$.  Then $\Delta$ has no nontrivial $\ZZ_q$-flows. 
\end{lemma}

\begin{proof}
Order the vertices $v_0, v_1, \dots, v_n$ so that $v= v_n$ is the largest.
Recall that the \emph{link of $v$ in $\Delta$} is defined as
\[
  \lk_{\Delta}(v):= \left\{ F\in\Delta : \, v\not\in F \text{ and } \{v\}\cup F\in\Delta \right\} ,
\]
and the \emph{deletion of $v$ in $\Delta$} is
\[
  \Delta-v := \left\{ F\in\Delta : \, v\not\in F \right\} .
\]
Now consider the boundary matrix $\partial\Delta$ of $\Delta$, with the rows and columns ordered in the following way:
\begin{enumerate}[(a)]
\item Let all ridges containing the vertex $v_n$ come first, and let them be ordered lexicographically.
\item Let all ridges in $\lk_{\Delta}(v_n)$ be next, ordered lexicographically.
\item Order the remaining ridges lexicographically.
\item Let all facets containing $v_n$ come first in the columns, and order them lexicographically.
\item Order the remaining facets (those that do not contain $v_n$) lexicographically.
\end{enumerate}

It is not hard to see that under these conditions the boundary matrix takes the following form:
\vspace{1mm}
\[\begin{array}{|c|c|}
& \\
\partial(\lk_{\Delta}(v_n)) & \mathbf{0}\\
& \\
\hline
& \\
\mathbf{\pm I} & \\
& \partial(\Delta\setminus v_n) \\
\cline{1-1}
& \\
\mathbf{0} & \\
\end{array} \]
\vspace{1mm}

\commentout{
We will prove each piece individually.
\begin{enumerate}[(a)]
\item This submatrix corresponds to the boundary matrix of the simplicial complex corresponding to  $\lk_{\Delta}(v)$.  Intuitively, the facets of the link of a vertex are the facets of $\Delta$ that contain $v_n$ (with $v_n$ removed), and the ridges of $\lk_{\Delta}(v_n)$ are the ridges of $\Delta$ containing $v_n$, with $v_n$ removed.  This corresponds precisely to the rows and columns of $(a)$.  Then, since we are removing the largest element, we do not affect the signs of ridges in the boundary map, so the elements of this submatrix are unchanged.

\item This block is zero as the rows all contain $v_n$, but the columns do not.

\item This block is either the identity matrix or negative identity matrix, as the rows (ridges) are precisely the facets with $v_n$ removed, and we have ordered them both lexicographically.  $\pm$ depends on the dimension of $\Delta$ (we are removing the $d^{\mbox{th}}$ element, so the ridge has sign $(-1)^d$; hence, if $d$ is even, we have $I$, if $d$ is odd, we have $-I$).

\item This block is $0$ because we have already listed all ridges containing $v_n$ or which are in the link of $v_n$.  That is, if we could add $v_n$ to a ridge in this section and have it correspond to a face of $\Delta$, it would be in $\lk(v_n)$.  Therefore, these ridges cannot be a subset of any facet containing $v_n$.  Note that we can also think of this block as having the ridges that contain vertices $v_i$ that are parallel to $v_n$ (that is, $[v_iv_n]\not\in\Delta$), so there could not be a facet containing both of them.

\item This block corresponds to the boundary matrix of the deletion of $v_n$ in $\Delta$, $\partial(\Delta-v_n)$.  As we said above, the facets of $\Delta-v_n$ are the facets of $\Delta$ that do not contain $v_n$, and the ridges of $\Delta-v_n$ are the ridges of $\Delta$ that do not contain $v_n$.  This corresponds precisely to the rows and columns of (e), and since we do not affect the parity of the vertices in the facets, the signs remain the same.
\end{enumerate}
}

\noindent
Since $\Delta$ is a cone over $v_n$, the two regions on the right-hand side of the matrix are nonexistent, and $\pm {\bf I}$ extends through all columns.  Therefore, we have a series of rows with precisely one nonzero entry.
\end{proof}


\section{The Number of Nowhere-Zero $\ZZ_q$-Flows on a Simplicial Complex}\label{proofsection}

\subsection{Tutte polynomials of matroids and proof of Theorem \ref{mainthm}}

To study the question of counting nowhere-zero flows, we recall the notion of a \emph{matroid} and refer to, e.g., \cite{Oxley-book}, for relevant background.
\commentout{
\begin{definition} A \emph{matroid} is an ordered pair $M=(E,\I)$, such that $E$ is a finite set and $\I$ is a collection of subsets of $E$ satisfying the following conditions:
\begin{enumerate}[1.]
\item $\emptyset\in\I$
\item if $I\in\I$ and $I'\subseteq I$, then $I'\in\I$
\item if $I_1,I_2\in\I$ and $|I_1|<|I_2|$, then there exists $e\in I_2\setminus I_1$ such that $I_1\cup\{e\}\in\I$.
\end{enumerate}
\end{definition}
The \emph{bases} of a matroid are the maximal independent sets; it follows from the above conditions that they are all of the same cardinality.  An element $e\in E$ that is contained in every base is a \emph{coloop}.  A subset of $E$ is \emph{dependent} if it is not independent, and a minimal dependent subset is called a \emph{circuit}.  The rank of any subset $S\subseteq E$, $r(S)$, is the size of the largest independent set contained in $S$, and the rank of the matroid is $r(M)=r(E)$.  An element $e\in E$ is a \emph{loop} if $r(\{e\})=0$.
}
Associated with every matroid $M=(E,\I)$, where $E$ is the \emph{ground set} and $\I$ the collection of \emph{independent sets} of $M$, is the \emph{Tutte polynomial}
\[T_M(x,y)=\sum_{S\subseteq E}(x-1)^{r(M)-r(S)}(y-1)^{|S|-r(S)} ,\]
where $r(S)$ denotes the \emph{rank} of $S$.
The Tutte polynomial satisfies the following recursive relation:

\[T_M(x,y) = \begin{cases}
T_{M/e}(x,y)+T_{M-e}(x,y) & \text{if $e$ is neither a loop nor a coloop,}\\
x \, T_{M/e}(x,y) & \text{if $e$ is a coloop,}\\
y \, T_{M/e}(x,y) & \text{if $e$ is a loop.}\end{cases}\]

The Tutte polynomial has been generalized \cite{Oxley-Welsh} by allowing the recursive definition to include coefficients in the case when $e$ is neither a loop nor a coloop.  We say that a function on the class of matroids is a \emph{generalized Tutte--Grothendieck invariant} if it is a function $f$ from the class of matroids to a field $\FF$ such that for all matroids $M$ and $N$, $f(M)=f(N)$ whenever $M\cong N$, and for all $e\in E(M)$,
\[f(M) = \begin{cases}
\sigma f(M-e)+\tau f(M/e) & \text{if is $e$ neither a loop nor a coloop,}\\
f(M(e))f(M-e) & \text{otherwise,}
\end{cases}\]
where $M(e)$ is the matroid consisting of the single element $e$, and $\sigma$ and $\tau$ are nonzero elements of $\FF$.  Let $I$ be the matroid consisting of a single coloop and $L$ be the matroid consisting of a single loop.  We will use the following fact \cite[Theorem 6.2.6]{Oxley-Welsh}:

\begin{theorem}\label{oxleywelshthm}
Let $\sigma$ and $\tau$ be nonzero elements of a field $\FF$.  Then there is a unique function $t'$ from the class of matroids into the polynomial ring $\FF[x,y]$ having the following properties:
\begin{enumerate}[{\rm (i)}]
\item $t'_{I}(x,y)=x$ and $t'_L(x,y)=y$.
\item If $e$ is an element of the matroid $M$ and $e$ is neither a loop nor a coloop, then
\[t'_M(x,y) = \sigma \, t'_{M-e}(x,y)+\tau \, t'_{M/e}(x,y).\]
\item If $e$ is a loop or a coloop of the matroid $M$, then
$t'_M(x,y)=t'_{M(e)}(x,y) \, t'_{M-e}(x,y).$
\end{enumerate}
Furthermore, this function $t'$ is given by
$t'_M(x,y)=\sigma^{|E|-r(E)}\tau^{r(E)} \, T_M (\tfrac{x}{\tau},\tfrac{y}{\sigma} ) \, . $
\end{theorem}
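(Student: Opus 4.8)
This result is quoted from \cite[Theorem 6.2.6]{Oxley-Welsh}; here is how one would prove it. The plan is to establish existence and uniqueness separately. For existence, set $t'_M(x,y):=\sigma^{|E|-r(E)}\tau^{r(E)}\,T_M\!\left(\tfrac{x}{\tau},\tfrac{y}{\sigma}\right)$ and verify (i)--(iii) using only the recursion for $T_M$ recalled above. For (i): a single coloop $I$ has $|E|=r(E)=1$ and $T_I(x,y)=x$, so $t'_I=\tau\cdot\tfrac{x}{\tau}=x$; a single loop $L$ has $|E|=1$, $r(E)=0$, and $T_L(x,y)=y$, so $t'_L=\sigma\cdot\tfrac{y}{\sigma}=y$. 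For (ii): if $e$ is neither a loop nor a coloop then $r(M-e)=r(M)$, $r(M/e)=r(M)-1$, and $|E(M-e)|=|E(M/e)|=|E|-1$; substituting these into the prefactor and using $T_M=T_{M-e}+T_{M/e}$ shows the two resulting summands are precisely $\sigma\,t'_{M-e}$ and $\tau\,t'_{M/e}$. For (iii): for a loop or coloop $e$ one has $M/e=M-e$, so from $T_M=x\,T_{M/e}$ (coloop) or $T_M=y\,T_{M/e}$ (loop), together with $|E|=|E(M-e)|+1$ and $r(M)=r(M-e)+1$ (coloop) or $r(M)=r(M-e)$ (loop), a short computation gives $t'_M=x\,t'_{M-e}=t'_I\,t'_{M-e}$ respectively $t'_M=y\,t'_{M-e}=t'_L\,t'_{M-e}$, that is, $t'_M=t'_{M(e)}\,t'_{M-e}$.

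For uniqueness, suppose $t'$ and $t''$ both satisfy (i)--(iii); I would show $t'_M=t''_M$ by induction on $|E(M)|$. If $|E(M)|=1$, then $M$ is a single loop or a single coloop, and (i) determines both values. If $|E(M)|\ge 2$ and $M$ has an element $e$ that is neither a loop nor a coloop, apply (ii): this expresses $t'_M$ and $t''_M$ by the same formula in terms of $t'$, respectively $t''$, evaluated at $M-e$ and $M/e$, which agree by the inductive hypothesis since these matroids have strictly fewer elements. If instead every element of $M$ is a loop or a coloop, pick any $e\in E(M)$: then $M-e$ still has at least one element, so $t'_{M-e}=t''_{M-e}$ by induction, while $t'_{M(e)}=t''_{M(e)}$ by (i), and (iii) forces $t'_M=t''_M$. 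This completes the induction.

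The only genuinely delicate part is the exponent bookkeeping in the existence step: one must track exactly how $|E|$ and $r(E)$ change under deletion and contraction in each of the three cases and confirm that the powers of $\sigma$ and $\tau$ collapse into the asserted multipliers $x$, $y$, $\sigma$, $\tau$. The loop/coloop step in the uniqueness argument also implicitly uses that $\FF[x,y]$ is an integral domain, which --- together with the hypothesis $\sigma,\tau\ne 0$ --- is why working over a field is the natural setting. None of the steps is deep; the theorem is in essence a rescaling of the classical Tutte polynomial, and for the purposes of the present paper it is used as a black box.
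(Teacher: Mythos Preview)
Your proposal is correct. As you note at the outset, the paper does not prove this theorem; it is quoted verbatim from \cite[Theorem 6.2.6]{Oxley-Welsh} and used as a black box in the proof of Theorem~\ref{mainthm}. There is therefore no ``paper's own proof'' to compare against, and the argument you sketch is the standard one: verify that the rescaled Tutte polynomial satisfies (i)--(iii) by tracking how $|E|$ and $r(E)$ change under deletion and contraction, then establish uniqueness by induction on $|E|$. Your bookkeeping in the existence step is accurate.

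One minor remark: your closing comment that the uniqueness argument ``implicitly uses that $\FF[x,y]$ is an integral domain'' is not quite right. In the loop/coloop step you conclude $t'_M = t'_{M(e)}\,t'_{M-e} = t''_{M(e)}\,t''_{M-e} = t''_M$ directly from equality of the factors (by (i) and the inductive hypothesis); no cancellation is involved, so the integral-domain hypothesis is not used there. The nonvanishing of $\sigma,\tau$ is needed only so that the substitutions $x/\tau$, $y/\sigma$ and the inversion of the prefactor make sense.
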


We refer the reader to \cite{Tutte-book},\cite{Welsh-book}, or \cite[Ch.\ 6, Sec.\ 1, 2]{White-book}, for more background on and applications of the (generalized) Tutte polynomial.

Finally, recall that any matrix may be realized as a matroid by taking the ground set to be the list of columns, and the independent sets to be the linearly independent subsets of columns.  If $E$ is the ground set of this matroid, and $y\in \ZZ^{|E|}_q$, then we define the support of $y$ as $\supp(y):=\{e\in E : \, y_e\neq 0\}$. 

\begin{proof}[of Theorem \ref{mainthm}]
Let $\Delta$ be a pure simplicial complex, and $\partial\Delta$ be the boundary matrix associated with $\Delta$.  Let $M$ be the matroid given by the columns of $\partial\Delta$, denoted as the ground set~$E$.

First, we claim that
\[
  T_M(0,1-q)= \left| \left\{ y\in \ker(M)\bmod q : \, \supp(y)=E \right\} \right|
\]
for any prime $q$ that is sufficiently large.  We show this by proving that the function
\[
  g_M(q):=|\{y\in \ker(M)\bmod q : \, \supp(y)=E\}|
\]
is a generalized Tutte--Grothendieck invariant with $\sigma=-1$ and $\tau=1$.  The matrix for the single coloop $I$ is a single (linearly independent) vector, thus $g_I(q)=0$.  The matrix for the single loop $L$ is the zero vector, so $g_L(q)=q-1$.  Thus $g_M(q)$ is well defined if $|E|=1$.

Assume $g$ is well-defined for $|E|<n$, and let $|E|=n$.  Let $e\in E$, and suppose that $e$ is neither a loop nor a coloop.  In the case of a matroid corresponding to a vector configuration (as is the case with the boundary matrix), contraction models quotients: let $V$ be the vector space given by $E$ (the columns of the boundary matrix), and let $\pi:V\rightarrow V/V_e$ be the canonical quotient map, where $V_e$ is the vector space with basis $e$.  Then, the contracted matroid $M/e$ is the matroid associated with the vector configuration $\{\pi(v)\}_{v\in E-\{e\}}$ in the quotient space $V/V_e$.
Let
\begin{align*}
W &:= \{y\in \ker(M) : \, \supp(y)=E\},\\
X &:= \{y\in \ker(M-e) : \, \supp(y)=E-\{e\}\},\mbox{ and}\\
Z &:= \{y\in \ker(M/e) : \, \supp(y)=E-\{e\}\}.
\end{align*}
We see that $W\cap X=\emptyset$.  By linearity of $\pi$, we have that $W\cup X\subset Z$.  Now, suppose we have $y\in Z$.  This is a linear combination of all vectors in $E-\{e\}$ which equals a scalar $\alpha\in\ZZ_q$ times $e$ (that is, a linear combination equivalent to zero in the quotient space).  If $\alpha\neq 0$, then $(y,\alpha)\in W$.  If $\alpha=0$, $y\in X$.  Therefore
\[g_M(q) = g_{M/e}(q)-g_{M-e}(q) \, .\]

If $e$ is a loop (a column of zeros), then $e$ may be assigned any of the $q-1$ nonzero values of $\ZZ_q$.  Thus, for loops
\begin{align*}
g_M(q) &= (q-1) \, g_{M-e}(q)\\
 &= g_L(q) \, g_{M-e}(q) \, .
\end{align*}

Finally, suppose $e$ is a coloop.  Then every maximally independent set of columns contains $e$, and we may, using row operations, rewrite $M$ so that the column corresponding to $e$ has precisely one nonzero element, and the row containing this element also has precisely one nonzero element.  Therefore $y_e=0$ for all $y\in \ker(M)$, and
\begin{align*}
g_M(q) &= 0\cdot g_{M-e}(q)\\
 &= g_I(q) \, g_{M-e}(q) \, .
\end{align*}

Thus $g_M(q)$ is well-defined and a generalized Tutte--Grothendieck invariant.  We showed the cases for $|E|=1$ above, so we see that by Theorem \ref{oxleywelshthm}
\[g_M(q) = t'_M(0,q-1) = (-1)^{|E|-r(E)}T_M(0,1-q)\]
and so
\[T_M(0,1-q)=\left|\left\{y\in \ker(M)\bmod q  : \,  \supp(y)=E\right\}\right| .\]

It follows that the number of nowhere-zero $\ZZ_q$-flows on a simplicial complex $\Delta$ is equal to $T_{\partial\Delta}(0,1-q)$ 
and hence is a polynomial in $q$.  Using the definition of the Tutte polynomial, we see that the degree of this polynomial, in terms of the matroid, is $|E|-r(M)$.  From linear algebra, we know that
\[
  |E|=\dim(\ran(M))+\dim(\ker(M))=r(M)+\dim(\ker(M)) \, ,
\]
so $|E|-r(M)=\dim(\ker(M))$.  But $\dim(\ker(M))$ is just the dimension of the top rational homology of $\Delta$, $\dim_{\QQ}(\widetilde{H}_d(\Delta;\QQ))$.  By definition, this is $\beta_d(\Delta)$, where $d=\dim(\Delta)$.
\end{proof}

\noindent  
\emph{Remark.} 
We require that $q$ be prime as otherwise $V$ and $V_e$ would be modules, rather than vector spaces.  Requiring that $q$ be sufficiently large ensures that the matrices reduce correctly over $\FF_q$; that is, we require that the linear (in)dependencies are the same over $\QQ$ as they are over $\FF_q$.  For a simplicial complex of dimension $d$, a sufficient bound would be $(d+1)^{\frac{d+1}{2}}$, though this is not necessarily tight.


\subsection{Ehrhart quasipolynomials and proof of Theorem \ref{mainthm2}}

Let $\P \subset \RR^d$ be a \emph{rational polytope}, i.e., the convex hull of finitely many points in $\QQ^d$.
Then Ehrhart's theorem \cite{ehrhartpolynomial,ccd} says that the lattice-point counting function
\[
  \ehr_\P(t) := \# \left( t \P \cap \ZZ^d \right) 
\]
is a \emph{quasipolynomial} in the integer variable $t$, i.e., there exist polynomials $p_0(t), p_1(t), \dots, p_{ k-1 }(t)$ such that
\[
    \ehr_\P(t) = p_j(t) \qquad \text{ if } \qquad t \equiv j \bmod k \, .
\]
The minimal such $k$ is the \emph{period} of $\ehr_\P(t)$ and the polynomials $p_0(t), p_1(t), \dots, p_{ k-1 }(t)$ are its \emph{constituents}.
Since rational polytopes are precisely sets of the form $\left\{ x \in \RR^d : \, Ax \le b \right\}$ for some integral matrix $A$ and integral vector $b$, it is a short step to realize that the flow counting function $\phi_\Delta(q)$ is a quasipolynomial in $q$: it counts the integer lattice points $x = \left( x_1, x_2, \dots, x_d \right) $ (where $d$ is the number of facets of $\Delta$) that satisfy
\[
  0 < x_j < q
  \qquad \text{ and } \qquad
  \partial\Delta (x) = mq \ \text{ for some } \ m \in \ZZ \, .
\]
(Note that we only need to consider finitely many $m$.)
So $\phi_\Delta(q)$ is a sum of Ehrhart quasipolynomials and thus a quasipolynomial in $q$
We remark that a similar setup was used in \cite{breuersanyal} to study flow polynomials of graphs from an Ehrhartian perspective.

\begin{proof}[of Theorem \ref{mainthm2}]
We just showed that $\phi_\Delta(q)$ is a quasipolynomial, say of period $p$.
By Dirichlet's theorem, there exist infinitely many primes of the form $j + kp$ for $\gcd(j,p) = 1$ and $k \in \ZZ_{ \ge 0 }$, and thus $\phi_\Delta(j+kp)$ agrees with the polynomial found in Theorem \ref{mainthm} for those $j$ with $\gcd(j,p) = 1$.
\end{proof}


\subsection{Quasipolynomiality of the Flow Function}
Given that the number of nowhere-zero $\ZZ_q$-flows on a graph is a polynomial in $q$, it is natural to ask whether the same is true for all simplicial complexes; however, this is not always the case.  Consider the case of a Klein bottle, $K$.  We have the following top homologies for $K$:
\[H_2(K,\ZZ_q) = \begin{cases}
0 & \mbox{if }q\mbox{ is odd, and}\\
\ZZ_2 & \mbox{if }q\mbox{ is even.}
\end{cases}\]
Therefore, the number of nowhere-zero $\ZZ_q$-flows on $K$ is given by a quasipolynomial of period $2$:
\[\phi_K(q) = \begin{cases}
0 & \mbox{if }q\mbox{ is odd, and}\\
1 & \mbox{if }q\mbox{ is even.}
\end{cases}\]


\subsection{Calculating a Flow Polynomial: An Example}
Consider the triangular bipyramid $\Delta$ with vertex set $\{0,1,2,3,4\}$ and facets $\{012,013,023,123,124,134,$ $234\}$.  Then $\partial\Delta$ is
\[\begin{array}{c|ccccccc}
 & 012 & 013 & 023 & 123 & 124 & 134 & 234\\
\hline
01 & 1 & 1 & 0 & 0 & 0 & 0 & 0\\
02 & -1& 0 & 1 & 0 & 0 & 0 & 0\\
03 & 0 & -1& -1& 0 & 0 & 0 & 0\\
12 & 1 & 0 & 0 & 1 & 1 & 0 & 0\\
13 & 0 & 1 & 0 & -1& 0 & 1 & 0\\
23 & 0 & 0 & 1 & 1 & 0 & 0 & 1\\
14 & 0 & 0 & 0 & 0 & -1& -1& 0\\
24 & 0 & 0 & 0 & 0 & 1 & 0 & -1\\
34 & 0 & 0 & 0 & 0 & 0 & 1 & 1\\
\end{array}.\]
The kernel is generated by
\[\{(1,-1,1,-1,0,0,0)^T,(0,0,0,1,-1,1,-1)^T\}.\]
Thus, any nowhere-zero $\ZZ_k$-flow will have the form $(a,-a,a,-a+b,-b,b,-b)^T$ where $a,b\in\ZZ_k$ and $a\neq b$.  We have $k-1$ non-zero choices for $a$, and $k-2$ non-zero choices for $b$.  We see that $\phi_{\Delta}(k)=(k-1)(k-2)$.  For primes $k>1$ (1 being the maximum over all absolute values of all subdeterminants of $\partial\Delta$), we may also compute $\phi_{\Delta}(k)=T_{\partial\Delta}(0,1-k)$ as
\begin{align*}
T_{\partial\Delta}(0,1-k) &= \sum_{S\subseteq E}(-1)^{r(M)-r(S)}(-k)^{|S|-r(S)}\\
 &= k^2-3k+2\\
 &= (k-1)(k-2).
\end{align*}


\section{Flows on Triangulations of Manifolds}\label{manifoldsection}

\begin{proof}[of Proposition \ref{manifoldprop}]
Consider a pure simplicial complex that is the triangulation of a connected manifold.

First, if a manifold has boundary, then there exists at least one ridge that belongs to only one facet.  Since this corresponds to a row in its boundary matrix with precisely one nonzero entry, any vector in the kernel must have a zero in the coordinate corresponding to the facet containing this ridge.  Therefore, manifolds with boundary do not admit nowhere-zero flows.

If a triangulated manifold $M$ is without boundary, then every ridge belongs to precisely two facets.  This corresponds to every row of $\partial M$ having exactly two nonzero entries.  Therefore, since our manifold is connected, in a valid flow the assigned value of any facet is equal or opposite mod $q$ to the assigned value of any other facet.  It follows that every flow that is somewhere zero must in fact be trivial.

It is known (see, for instance \cite[Chapter 3.3, Theorem 3.26]{Hatcher-book}) that the top homology of a closed, connected, and $\ZZ$-orientable manifold of dimension $n$ is
\[\widetilde{H}_n(M,\ZZ) \cong\ZZ.\]
In terms of boundary matrices, this means that the kernel of the boundary matrix has rank one, since there are no simplices of dimension higher than $n$.  By our comment above, we see that this element (a linear combination of the facets), must be nowhere-zero, and all entries are $\pm a\in\ZZ$.  It is then easy to see that triangulations of closed, connected, orientable manifolds have precisely $q-1$ nowhere-zero $\ZZ_q$ flows.

If $M$ is non-orientable, connected, and of dimension $n$, then we know (again, see \cite{Hatcher-book}) that the top homology is
\[\widetilde{H}_n(M,\ZZ)=0,\]
so there are no nontrivial flows on triangulations of such manifolds, except when $q=2$.  As every manifold is orientable over $\ZZ_2$, and since every row of the boundary matrix has precisely two nonzero entries, the all-ones vector is a nowhere-zero $\ZZ_2$-flow on $M$ (in fact, the only one).
\end{proof}


\section{Open Questions}
\label{openquestion}
We have shown above that not all simplicial complexes have flow functions that are polynomials.  However, it is still natural to ask for which complexes $\phi_{\Delta}(q)$ \emph{is} a polynomial.  \commentout{Early experiments indicate that simplicial complexes $\Delta$ that are acyclic in positive codimension (that is, $\dim_{\mathbb{Q}}(H_i(\Delta,\mathbb{Q}))=0$ for $i<\dim(\Delta)$), which include classes of complexes such as simplicial spheres, shifted, matroid, and Ferrers complexes, have polynomial flow functions.}  Early experiments indicate this should include simplicial complexes that are convex-ear decomposable (see \cite{Chari}), which include complexes such as matroid, coloring, and a large subclass of unipolar complexes.  We intend to use the structure of these complexes to further specify their specific flow functions.

Many graph polynomials satisfy \emph{combinatorial reciprocity} theorems, i.e., they have an (a priori nonobvious) interpretation when evaluated at negative integers. The classical example is Stanley's theorem connecting the chromatic polynomial of a graph to acyclic orientations \cite{Stanley-recip}; the reciprocity theorem for flow polynomials is much younger and was found by Breuer and Sanyal \cite{breuersanyal}, starting with a geometric setup not unlike that of our proof of Theorem \ref{mainthm2}. We hope to lift their methods into the world of simplicial complexes in the near future. 


\acknowledgements
\label{sec:ack}
The authors would like to thank Eric Babson, Andrew Berget, Felix Breuer, Jesus De Loera, and Steven Klee for suggestions and discussions.

\bibliographystyle{amsalpha}
\nocite{Athanasiadis}
\nocite{Beck-book}
\nocite{Stanley-book}
\nocite{Oxley-book}
\nocite{Duval1}
\nocite{ReinerLec}
\nocite{Stanley-EC1}
\nocite{Kochol}
\nocite{Tutte-book}
\nocite{zhang1997integer}
\bibliography{simpflows}
\label{sec:biblio}

\end{document}